\numberwithin{equation}{section}
 \theoremstyle{definition}
  \newtheorem{definition}[equation]{Definition}
  \newtheorem*{definition*}{Definition}
  \newtheorem*{example*}{Example}
 \theoremstyle{plain}
  \newtheorem*{lemma*}{Lemma}
  \newtheorem{proposition}[equation]{Proposition}
  \newtheorem*{proposition*}{Proposition}
  \newtheorem{theorem}[equation]{Theorem}
  \newtheorem*{theorem*}{Theorem}
  \newtheorem*{corollary*}{Corollary}
 \theoremstyle{remark}
  \newtheorem{remark}[equation]{Remark}
  \newtheorem*{remark*}{Remark}
  \newcommand{\Homeo}{{\mathrm{Homeo}}}
  \newcommand{\abs}[1]{\left\lvert{#1}\right\rvert}
  \newcommand{\floor}[1]{\left\lfloor{#1}\right\rfloor}
\begin{document}
\title[bounded Euler class and symplectic rotation number]{The Bounded Euler Class and the Symplectic Rotation Number}
\author{Daiki UDA}
\address{Graduate School of Mathematics, Nagoya University, Japan}
\email{m19007g@math.nagoya-u.ac.jp}

\begin{abstract}
  Ghys~\cite{Ghys01} established the relationship between the bounded Euler class in $H_{b}^{2}(\Homeo_{+}(S^{1});\mathbb{Z})$ and the Poincar\'{e} rotation number, that is,
  he proved that the pullback of the bounded Euler class under a homomorphism $\varphi \colon \mathbb{Z} \to \Homeo_{+}(S^{1})$ coincides with the Poincar\'{e} rotation number of $\varphi(1)$.
  In this paper, we extend the above result to the symplectic group in some sense,
  and clarify the relationship between the bounded Euler class in $H_{b}^{2}(Sp(2n;\mathbb{R});\mathbb{Z})$ and the symplectic rotation number investigated in Barge-Ghys~\cite{B-G}.
\end{abstract}

\maketitle

\section{Introduction and Statement of Result}
Let $G = \Homeo_{+}(S^{1})$ be the group of orientation-preserving homeomorphisms on the circle $S^{1}$ and $\widetilde{G}$ denote its universal cover.
It is known that $\widetilde{G}$ is identified with the group of self-homeomorphisms of $\mathbb{R}$ which commute with translations by integers:
\[\widetilde{G} \cong \{ \tilde{g} \in \Homeo_{+}(\mathbb{R}) \mid \forall x \in \mathbb{R}, \tilde{g}(x + 1) = \tilde{g}(x) + 1 \}.\]
It is also known that there exist mappings $\tau \colon \widetilde{G} \to \mathbb{R}$ and $\rho \colon G \to \mathbb{R}/\mathbb{Z}$
called, respectively, the Poincar\'{e} translation number and the Poincar\'{e} rotation number,
which are defined by the formulae:
\[\tau(\tilde{g}) = \lim_{n \to \infty}\frac{\tilde{g}^{n}(0)}{n};\ \rho(g) = \tau(\tilde{g}) \bmod \mathbb{Z},\]
where $\tilde{g}$ is any lift of $g$ in the second formula.

Let $H^{\bullet}(\Gamma;\mathbb{Z})$ (resp. $H_{b}^{\bullet}(\Gamma;\mathbb{Z})$) denote the group cohomology (resp. the bounded cohomology) of a discrete group $\Gamma$ with coefficients in $\mathbb{Z}$ (see Section~\ref{Preliminaries}.).
For $\Gamma = \mathbb{Z}$, it is known that a homomorphism $\mathbb{R} \to H_{b}^{2}(\mathbb{Z};\mathbb{Z}); r \mapsto -[c_{r}]$
induces an isomorphism $\mathbb{R}/\mathbb{Z} \cong H_{b}^{2}(\mathbb{Z};\mathbb{Z})$.
Here the bounded $2$-cocycle $c_{r} \colon \mathbb{Z} \times \mathbb{Z} \to \mathbb{Z}$ is given by
\[c_{r}(n, m) = \floor{rn} + \floor{rm} - \floor{r(n+m)},\]
where $\floor{\cdot}$ denotes the floor function.
On the other hand, for $\Gamma = G$, it is known that the universal cover $\widetilde{G} \to G$ induces a central $\mathbb{Z}$-extension
\[0 \to \mathbb{Z} \to \widetilde{G} \to G \to 1,\]
and this central extension determines the Euler class $e \in H^{2}(G; \mathbb{Z})$.
Furthermore, the Euler class $e$ has a bounded representative (see, e.g., \cite[Lemma 6.3]{Ghys01})
and the resulting class is called the bounded Euler class $e_{b} \in H_{b}^{2}(G;\mathbb{Z})$.

Ghys investigated these objects, and found the following relationship between the bounded Euler class $e_{b}$ and the Poincar\'{e} rotation number:
\begin{theorem}[{\cite[Theorem 6.4]{Ghys01}}]
  \label{Ghys}
  Given $g \in G$, let $\varphi_{g} \colon \mathbb{Z} \to G$ be a homomorphism defined by $\varphi_{g}(k) = g^{k}$.
  Then, under the identification $H_{b}^{2}(\mathbb{Z};\mathbb{Z}) \cong \mathbb{R}/\mathbb{Z}$, the pullback $\varphi_{g}^{*}(e_{b})$ of the bounded Euler class coincides with the Poincar\'{e} rotation number of $g$. \qed
\end{theorem}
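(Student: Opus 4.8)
The plan is to unwind both sides explicitly and compare them cocycle by cocycle. First I would fix a lift $\tilde g \in \widetilde G$ of $g$ and use it to build a set-theoretic section $s \colon G \to \widetilde G$ of the central extension $0 \to \mathbb Z \to \widetilde G \to G \to 1$; the associated $2$-cocycle $\Phi(h_1,h_2) = s(h_1)s(h_2)s(h_1h_2)^{-1} \in \mathbb Z$ represents the Euler class $e$, and one checks (this is the content of the cited Lemma 6.3 of \cite{Ghys01}) that a suitable choice of section makes $\Phi$ bounded, so the same formula represents $e_b$. Then I would pull back along $\varphi_g$: since $\varphi_g(k) = g^k$, the lift $\tilde g$ determines the section on the cyclic subgroup, and $\varphi_g^{*}\Phi(n,m)$ becomes an explicit expression in terms of how $\tilde g^{\,n}$, $\tilde g^{\,m}$ and $\tilde g^{\,n+m}$ differ by integer translations — in effect measuring the integer part of the displacement of $\tilde g$ under iteration.

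The key computational step is to identify this pulled-back cocycle with $-c_r$ for $r = \tau(\tilde g)$. Concretely, writing $r = \tau(\tilde g)$, I expect the section to be normalized so that $\varphi_g^{*}\Phi(n,m)$ differs from $\floor{rn} + \floor{rm} - \floor{r(n+m)}$ by a coboundary $\delta u$ for some bounded function $u \colon \mathbb Z \to \mathbb Z$ — intuitively, $u(n)$ records the bounded discrepancy between $\tilde g^{\,n}(0)$ and $rn$, which is bounded precisely because $\abs{\tilde g^{\,n}(0) - rn}$ is bounded by the standard estimate on translation numbers (this boundedness is exactly why we land in $H_b^2$ rather than merely $H^2$). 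Since cohomologous bounded cocycles represent the same class, $\varphi_g^{*}(e_b) = -[c_r] = [c_r]$ reversed in sign, which under the stated isomorphism $\mathbb R/\mathbb Z \cong H_b^2(\mathbb Z;\mathbb Z)$, $r \mapsto -[c_r]$, corresponds to $r \bmod \mathbb Z = \tau(\tilde g) \bmod \mathbb Z = \rho(g)$.

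The main obstacle is bookkeeping the signs and the exact normalization of the section so that the discrepancy function $u$ is genuinely integer-valued and bounded, and so that the sign of the pullback matches the sign convention in the isomorphism $r \mapsto -[c_r]$; a careless choice of section gives $c_r$ up to sign or up to an integer shift in $r$, which would land on $-\rho(g)$ instead of $\rho(g)$. I would handle this by computing $\Phi(n,m)$ directly from the defining relation $\tilde g(x+1) = \tilde g(x)+1$: evaluating lifts at $0$ and using $\floor{\tilde g^{\,n}(0)}$ as the integer ambiguity, one gets $\varphi_g^{*}\Phi(n,m) = \floor{\tilde g^{\,n}(0)} + \floor{\tilde g^{\,m}(0)} - \floor{\tilde g^{\,n+m}(0)}$ plus controlled error terms, and then the estimate $\tilde g^{\,n}(0) = rn + O(1)$ lets me replace each $\floor{\tilde g^{\,k}(0)}$ by $\floor{rk}$ at the cost of a bounded coboundary. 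Everything else — cocycle identity, boundedness, well-definedness of the pullback on bounded cohomology — is routine once this identification is in place.
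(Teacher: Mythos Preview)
The paper does not supply its own proof of Theorem~\ref{Ghys}; it is quoted from \cite{Ghys01} and marked with \qed. What the paper \emph{does} prove is the symplectic analogue (the main theorem in Section~3), and that argument specializes verbatim to $G=\Homeo_{+}(S^{1})$, so that is the natural benchmark for your proposal.

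Your approach is correct, but it takes a slightly less direct route than the paper's. The paper chooses as connection cochain the map $\floor{\,\cdot\,}\circ\tau$, where $\tau$ is the \emph{already homogenized} translation number. Homogeneity then gives, for any lift $s(g)$ and every $k\in\mathbb{Z}$,
\[
\floor{\tau\bigl(s(g)^{k}\bigr)}=\floor{k\,\tau(s(g))}=\floor{rk}=\beta_{r}(k),
\]
so the pulled-back cocycle is \emph{equal} to $\delta\beta_{r}$ on the nose. No coboundary correction, no error terms, and the sign falls out immediately from $e_{b}=[-\sigma]$ and Proposition~\ref{bdd-coh-of-Z}.

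You instead work with the un-homogenized quasi-morphism $\tilde g\mapsto\tilde g(0)$ and then absorb the bounded discrepancy $\tilde g^{\,k}(0)-rk$ into a coboundary $\delta u$. This is fine, but your ``controlled error terms'' and the integer-valuedness and boundedness of $u$ do require an explicit check (for instance $u(k)=\floor{\tilde g^{\,k}(0)}-\floor{rk}$ works, using the standard estimate $\lvert\tilde g^{\,k}(0)-rk\rvert\le C$); you correctly flag this as the main bookkeeping obstacle. The paper simply sidesteps it by building the homogeneity into the connection cochain from the outset.

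What your route buys: it is closer in spirit to Ghys' original treatment for $\Homeo_{+}(S^{1})$, where evaluating lifts at $0$ is the natural and concrete thing to do. What the paper's route buys: it works uniformly for any central $\mathbb{Z}$-extension of a uniformly perfect group, where $\tau$ is characterized abstractly by Proposition~\ref{fundamental-remark} and no ``evaluate at $0$'' is available; and it yields an exact cocycle identity rather than one modulo a coboundary, which makes the sign and normalization issues you worry about disappear.
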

For more details, see Ghys' survey article \cite{Ghys01}.

In the paper Barge-Ghys~\cite{B-G},
they defined the symplectic rotation number and the symplectic translation number on the symplectic group $Sp(2n;\mathbb{R})$ and its universal cover, respectively,
and gave the way to compute the symplectic rotation number for a symplectic matrix in terms of its eigenvalues.

One can easily see that the definitions of these maps are similar to those of Poincar\'{e}'s.
Furthermore, as in the case of $\Homeo_{+}(S^{1})$, the universal cover of $Sp(2n; \mathbb{R})$ induces a central $\mathbb{Z}$-extension,
which corresponds to the bounded Euler class in $H_{b}^{2}(Sp(2n;\mathbb{R}); \mathbb{Z})$.
In this paper, we clarify the relationship between the bounded Euler class and the symplectic rotation number.
The following is our main result, which is comparable to Theorem~\ref{Ghys} due to Ghys:
\begin{theorem}
  \label{main}
  Given a symplectic matrix $g \in Sp(2n;\mathbb{R})$, let $\varphi_{g} \colon \mathbb{Z} \to Sp(2n;\mathbb{R})$ be a homomorphism defined by $\varphi_{g}(k) = g^{k}$.
  Then, under the identification $H_{b}^{2}(\mathbb{Z};\mathbb{Z}) \cong \mathbb{R}/\mathbb{Z}$,
  the pullback of the bounded Euler class by $\varphi_{g}$ coincides with the symplectic rotation number of $g$.\qed
\end{theorem}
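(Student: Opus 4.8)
The plan is to deduce Theorem~\ref{main} from a single general principle that also underlies Theorem~\ref{Ghys}. Write $\widetilde{Sp}=\widetilde{Sp}(2n;\mathbb{R})$ for the universal cover and let
\[0\longrightarrow\mathbb{Z}\xrightarrow{\ \iota\ }\widetilde{Sp}\xrightarrow{\ \pi\ }Sp(2n;\mathbb{R})\longrightarrow1\]
be the central extension defining the Euler class $e$, with $\iota(1)$ the chosen generator of the center $\pi_{1}(Sp(2n;\mathbb{R}))\cong\mathbb{Z}$. The inputs I would borrow from Barge--Ghys~\cite{B-G} are exactly these: the symplectic translation number $\widetilde{\rho}\colon\widetilde{Sp}\to\mathbb{R}$ is a \emph{homogeneous quasimorphism}; its normalization is $\widetilde{\rho}(\iota(1))=1$, so that the orientation of the central $\mathbb{Z}$ used to define $e$ matches the one implicit in $\widetilde{\rho}$; and the symplectic rotation number is $\rho(g)=\widetilde{\rho}(\tilde g)\bmod\mathbb{Z}$ for any lift $\tilde g$ of $g$ (which is well defined since $\widetilde{\rho}(\iota(k)w)=k+\widetilde{\rho}(w)$).

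First I would produce an explicit bounded cocycle for $e_{b}$. Because $\iota(k)$ is central and a homogeneous quasimorphism is additive on commuting elements, $\widetilde{\rho}(\iota(k)w)=k+\widetilde{\rho}(w)$, so every fiber of $\pi$ contains a unique element whose $\widetilde{\rho}$-value lies in $[0,1)$; sending $g$ to that element defines a set-theoretic section $s\colon Sp(2n;\mathbb{R})\to\widetilde{Sp}$. Let $c(g,h)=\iota^{-1}\bigl(s(g)s(h)s(gh)^{-1}\bigr)\in\mathbb{Z}$ be the resulting Euler cocycle; it represents $e$. Applying $\widetilde{\rho}$ to $s(g)s(h)=\iota(c(g,h))\,s(gh)$ gives $c(g,h)=\widetilde{\rho}(s(g)s(h))-\widetilde{\rho}(s(gh))$, whence the defect bound for $\widetilde{\rho}$ together with the fact that $\widetilde{\rho}(s(g)),\widetilde{\rho}(s(h)),\widetilde{\rho}(s(gh))\in[0,1)$ makes $\abs{c(g,h)}$ bounded independently of $g,h$. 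Thus $c$ represents the bounded Euler class $e_{b}$.

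The computation of $\varphi_{g}^{*}(e_{b})$ is then short. Fix the lift $a=s(g)$ of $g$ and write $s(g^{j})=\iota(k_{j})\,a^{j}$ for the unique $k_{j}\in\mathbb{Z}$, $j\in\mathbb{Z}$. Pushing the central factors through, $\varphi_{g}^{*}c\,(n,m)=c(g^{n},g^{m})=k_{n}+k_{m}-k_{n+m}$. Applying $\widetilde{\rho}$ to $s(g^{j})=\iota(k_{j})a^{j}$ and using homogeneity ($\widetilde{\rho}(a^{j})=j\,\widetilde{\rho}(a)$) gives $k_{j}=\beta(j)-j\,\widetilde{\rho}(a)$ with $\beta(j):=\widetilde{\rho}(s(g^{j}))\in[0,1)$; since the linear term is annihilated by the coboundary, $\varphi_{g}^{*}c=\delta\beta$ with $\beta$ a \emph{bounded} real $1$-cochain on $\mathbb{Z}$. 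Now $H_{b}^{1}(\mathbb{Z};\mathbb{R})=0$ and, by amenability of $\mathbb{Z}$, $H_{b}^{2}(\mathbb{Z};\mathbb{R})=0$, so the coefficient sequence $0\to\mathbb{Z}\to\mathbb{R}\to\mathbb{R}/\mathbb{Z}\to0$ identifies $H_{b}^{2}(\mathbb{Z};\mathbb{Z})$ with $H_{b}^{1}(\mathbb{Z};\mathbb{R}/\mathbb{Z})\cong\mathbb{R}/\mathbb{Z}$ via its connecting homomorphism; a direct check shows this agrees with the isomorphism $r\mapsto-[c_{r}]$ from the introduction. Unwinding the connecting homomorphism, an integral $2$-cocycle of the form $\delta\beta$ (with $\beta$ a bounded real $1$-cochain) represents the class corresponding to $\beta(1)\bmod\mathbb{Z}$; applied here this yields $\varphi_{g}^{*}(e_{b})=\beta(1)\bmod\mathbb{Z}=\widetilde{\rho}(s(g))\bmod\mathbb{Z}=\rho(g)$, which is the assertion. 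Running the identical argument for $\Homeo_{+}(S^{1})$ with $\widetilde{\rho}$ replaced by the Poincar\'{e} translation number $\tau$ reproduces Theorem~\ref{Ghys}, which confirms that the signs come out as stated.

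The content is thus formal once the setup is in place, and I expect the two real points of care to be the following. First, one must extract from \cite{B-G} that $\widetilde{\rho}$ is a homogeneous quasimorphism and that its normalization on the center of $\widetilde{Sp}$ agrees with the orientation of $\pi_{1}(Sp(2n;\mathbb{R}))$ fixing $e$; getting this exactly right (rather than off by a factor such as $2$ or $2\pi$) is where the statement ``coincides with'' has to be earned. Second, the identification of the abstract isomorphism $H_{b}^{2}(\mathbb{Z};\mathbb{Z})\cong\mathbb{R}/\mathbb{Z}$ with the explicit $c_{r}$-description has to be tracked carefully through the connecting homomorphism, signs included. If it is not already arranged in the preliminaries, one should also record that $e_{b}$ is a genuinely well-defined class and not merely some bounded lift of $e$, so that $\varphi_{g}^{*}(e_{b})$ is unambiguously $[\varphi_{g}^{*}c]$.
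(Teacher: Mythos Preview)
Your argument is correct and follows essentially the same route as the paper. Both proofs use the section $s$ singled out by the condition $\tau(s(g))\in[0,1)$ (equivalently, the connection cochain $\lfloor\cdot\rfloor\circ\tau$), express the pullback of the Euler cocycle as the coboundary of a quasi-morphism on $\mathbb{Z}$, and then read off the class under $H_{b}^{2}(\mathbb{Z};\mathbb{Z})\cong\mathbb{R}/\mathbb{Z}$.

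The only difference is in this last identification. The paper works with the $\mathbb{Z}$-valued cochain $\beta(k)=\lfloor\tau(s(g)^{k})\rfloor$ and uses homogeneity of $\tau$ to recognize it \emph{exactly} as $\beta_{r}$ with $r=\tau(s(g))$, so that Proposition~\ref{bdd-coh-of-Z} applies on the nose. You instead keep the bounded $\mathbb{R}$-valued cochain $\beta(j)=\widetilde{\rho}(s(g^{j}))\in[0,1)$ and invoke the connecting homomorphism of $0\to\mathbb{Z}\to\mathbb{R}\to\mathbb{R}/\mathbb{Z}\to0$; since $\{jr\}+\lfloor jr\rfloor=jr$, your $\beta$ and the paper's $\beta_{r}$ differ by a linear term, so the two computations are literally the same up to a sign that you correctly track. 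Your version is a bit more conceptual (and makes transparent why the argument transfers verbatim to $\Homeo_{+}(S^{1})$), while the paper's is slightly more direct because it lands on the explicit cocycle $\delta\beta_{r}$ without passing through the long exact sequence. The cautions you flag about normalization of $\widetilde{\rho}$ on the center and about sign conventions are exactly the places where care is needed, and the paper handles them by fixing the connection cochain $\lfloor\cdot\rfloor\circ\tau$ and the isomorphism of Proposition~\ref{bdd-coh-of-Z} from the outset.
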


\section{Preliminaries}
\label{Preliminaries}
In this section, we review the definitions and properties of some concepts which we will need to state and prove our result.
Let $G$ be a group and $A$ an abelian group.
For each integer $k \geq 0$, let $C^{k}(G,A)$ denote the set of all mappings of $G^{k}$ into A, and define $\delta \colon C^{k}(G, A) \to C^{k+1}(G, A)$ by
\[\begin{gathered}
  \delta c(g_{1}, \ldots, g_{k+1}) = c(g_{2}, \ldots, g_{k+1}) + \sum_{i = 1}^{k} (-1)^{i}c(g_{1}, \ldots, g_{i}g_{i+1}, \ldots, g_{k+1}) \\
  \text{} + (-1)^{k+1} c(g_{1}, \ldots, g_{k}).
\end{gathered}\]
Then $(C^{\bullet}(G,A), \delta)$ constitutes a cochain complex and its cohomology $H^{\bullet}(G;A)$ is called the \emph{group cohomology of $G$ with coefficients in $A$}.
It is known that the second group cohomology $H^{2}(G;A)$ and the set of equivalence classes of central $A$-extensions of $G$ are in $1$-$1$ correspondence:
Here a \emph{central $A$-extension of $G$} is a short exact sequence
\[0 \longrightarrow A \stackrel{i}{\longrightarrow} \Gamma \stackrel{p}{\longrightarrow} G \longrightarrow 1\]
of groups with $i(A) \subset Z(\Gamma)$, where $Z(\Gamma)$ is the center of $\Gamma$.
Sometimes we simply refer $\Gamma$ as a central $A$-extension of $G$.

The correspondence is given as follows:
For a central $A$-extension $\Gamma$ of $G$, choose a set-theoretic section $s \colon G \to \Gamma$ of a surjection $p \colon \Gamma \to G$.
Since $s(g_{1}g_{2})^{-1}s(g_{1})s(g_{2}) \in \ker(p) = i(A)$ for each $g_{1}, g_{2} \in G$, we can define a $2$-cochain $\chi \colon G^{2} \to A$ by the formula
\[i(\chi(g_{1}, g_{2})) = s(g_{1}g_{2})^{-1}s(g_{1})s(g_{2}).\]
Then it is straightforward to check that $\chi$ is indeed a $2$-cocycle and a cohomology class $[\chi] \in H^{2}(G;A)$ determined by $\chi$ is independent of the choice of a section.
This cohomology class is called the \emph{Euler class} of the extension and denoted by $e(\Gamma)$.
Assigning $e(\Gamma)$ to the equivalence class of $\Gamma$ is the required correspondence.

If $A$ has a norm (e.g., $A = \mathbb{Z}$ or $\mathbb{R}$ with the usual absolute value as its norm), we can consider a subcomplex $C_{b}^{\bullet}(G,A)$ of $C^{\bullet}(G,A)$ consisting of bounded cochains,
and its cohomology denoted by $H_{b}^{\bullet}(G;A)$ is called the \emph{bounded cohomology of G with coefficients in $A$}.
Given a central $A$-extension $\Gamma$ of $G$, if the $2$-cocycle $\chi \colon G^{2} \to A$ defined above is bounded,
then its cohomology class $[\chi] \in H_{b}^{2}(G;A)$ is called the \emph{bounded Euler class} of the extension $\Gamma$ and denoted by $e_{b}(\Gamma)$.

A $1$-cochain $f \in C^{1}(G,\mathbb{R})$ is called a \emph{quasi-morphism} if its coboundary is bounded, i.e.,
\[\sup_{g_{1}, g_{2} \in G} \abs{f(g_{1}) + f(g_{2}) - f(g_{1}g_{2})} < + \infty\]
holds.
A quasi-morphism $f \colon G \to \mathbb{R}$ is called a \emph{homogeneous quasi-morphism} if $f(g^{n}) = n \cdot f(g)$ for all $g \in G$ and $n \in \mathbb{Z}$.
There exists a way to construct a homogeneous quasi-morphism out of a quasi-morphism:
\begin{proposition}
  For a quasi-morphism $f \colon G \to \mathbb{R}$, define a map $\overline{f} \colon G \to \mathbb{R}$ by the formula
  \[\overline{f}(g) = \lim_{n \to \infty} \frac{f(g^{n})}{n}.\]
  Then this map is a unique homogeneous quasi-morphism on $G$ which satisfies the condition $\sup_{g \in G} \abs{\overline{f}(g) - f(g)} < + \infty$.
  The map $\overline{f}$ is called the \emph{homogenization} of $f$.\qed
\end{proposition}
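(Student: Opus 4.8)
The plan is to proceed in four stages: existence of the limit, the bounded-distance estimate, homogeneity, and finally uniqueness. Throughout I write $D = \sup_{g_{1}, g_{2} \in G} \abs{f(g_{1}) + f(g_{2}) - f(g_{1}g_{2})} < +\infty$ for the defect of $f$ and, for a fixed $g \in G$, abbreviate $a_{n} = f(g^{n})$. The quasi-morphism hypothesis applied to $g_{1} = g^{n}$ and $g_{2} = g^{m}$ gives the almost-additivity estimate $\abs{a_{n+m} - a_{n} - a_{m}} \le D$ for all $n, m \ge 1$.

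The central step, and the one I expect to be the main obstacle, is to show that $a_{n}/n$ converges. First I would prove by induction on $k$ that $\abs{a_{kn} - k a_{n}} \le (k-1)D$: the inductive step follows by writing $a_{(k+1)n} - (k+1)a_{n} = (a_{(k+1)n} - a_{kn} - a_{n}) + (a_{kn} - k a_{n})$ and bounding each parenthesized term. Dividing by $kn$ yields $\abs{a_{kn}/(kn) - a_{n}/n} \le D/n$, and comparing the two indices $n$ and $m$ through the common refinement $nm$ gives $\abs{a_{n}/n - a_{m}/m} \le D/n + D/m$. Hence $(a_{n}/n)_{n}$ is a Cauchy sequence and the limit $\overline{f}(g)$ exists.

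Next, fixing $n$ and letting $m \to \infty$ in the last inequality (in particular setting $n = 1$) produces the bounded-distance estimate $\abs{\overline{f}(g) - f(g)} \le D$ for every $g$, so $\sup_{g} \abs{\overline{f}(g) - f(g)} < +\infty$. For homogeneity I would first treat positive exponents: since $f((g^{p})^{n})/n = p \cdot a_{pn}/(pn)$ and the right-hand subsequence converges to $p\,\overline{f}(g)$, we get $\overline{f}(g^{p}) = p\,\overline{f}(g)$ for $p \ge 1$. The identity case $\overline{f}(e) = 0$ follows from $\abs{f(e)} \le D$, and the relation $\abs{f(g^{n}) + f(g^{-n})} \le 2D$ (obtained from the cocycle applied to $g^{n}$ and $g^{-n}$) gives $\overline{f}(g^{-1}) = -\overline{f}(g)$; combining these handles all integer exponents.

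It then remains to verify that $\overline{f}$ is itself a quasi-morphism and that it is unique. For the former, the triangle inequality together with the bounded-distance estimate gives $\abs{\overline{f}(g_{1}) + \overline{f}(g_{2}) - \overline{f}(g_{1}g_{2})} \le 3D + \abs{f(g_{1}) + f(g_{2}) - f(g_{1}g_{2})} \le 4D$, so the defect of $\overline{f}$ is finite. For uniqueness, suppose $h$ is another homogeneous quasi-morphism with $\sup_{g}\abs{h(g) - f(g)} < +\infty$. Then $\psi = h - \overline{f}$ is homogeneous and bounded, say $\abs{\psi} \le M$; homogeneity forces $\abs{\psi(g)} = \abs{\psi(g^{n})}/n \le M/n \to 0$, whence $\psi \equiv 0$ and $h = \overline{f}$. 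The only genuinely delicate point is the convergence argument establishing the limit; everything afterwards is a formal consequence of the two estimates $\abs{a_{kn} - k a_{n}} \le (k-1)D$ and $\abs{\overline{f}(g) - f(g)} \le D$.
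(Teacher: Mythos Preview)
Your argument is correct and complete: the Cauchy estimate via $\abs{a_{n}/n - a_{m}/m} \le D/n + D/m$ through the common refinement $nm$, the bounded-distance bound $\abs{\overline{f}(g) - f(g)} \le D$, the treatment of homogeneity for all integer exponents, and the uniqueness step are all standard and sound. Note, however, that the paper does not actually prove this proposition---it is stated with a terminal \qedsymbol{} and the reader is directed to Frigerio~\cite{Frigerio} for the argument---so there is no in-paper proof to compare against; your write-up is essentially the classical proof one finds in that reference.
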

Note that if $f \colon G \to \mathbb{Z}$ is a quasi-morphism, then its coboundary $\delta f$ is a bounded $2$-cocycle,
and hence defines a bounded cohomology class in $H_{b}^{2}(G;\mathbb{Z})$.
Using this remark, we can determine the second bounded cohomology of $\mathbb{Z}$ with coefficients in $\mathbb{Z}$:
\begin{proposition}
  \label{bdd-coh-of-Z}
  For a real number $r \in \mathbb{R}$, define a $1$-cochain $\beta_{r} \colon \mathbb{Z} \to \mathbb{Z}$ by $\beta_{r}(n) = \floor{rn}$,
  where $\floor{\cdot} \colon \mathbb{R} \to \mathbb{Z}$ is the floor function.
  Then $\beta_{r}$ is a quasi-morphism, and a homomorphism $\mathbb{R} \to H_{b}^{2}(\mathbb{Z};\mathbb{Z})$
  defined by $r \mapsto -[\delta\beta_{r}]$ induces an isomorphism $\mathbb{R}/\mathbb{Z} \cong H_{b}^{2}(\mathbb{Z};\mathbb{Z})$.\qed
\end{proposition}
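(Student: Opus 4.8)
The plan is to exhibit the map $\Phi\colon\mathbb{R}\to H_b^2(\mathbb{Z};\mathbb{Z})$, $r\mapsto-[\delta\beta_r]$, as a surjective homomorphism with kernel exactly $\mathbb{Z}$, so that the claimed isomorphism follows from the first isomorphism theorem. Writing $\{x\}=x-\floor{x}$, the identity $\delta\beta_r(n,m)=\floor{rn}+\floor{rm}-\floor{r(n+m)}=\{r(n+m)\}-\{rn\}-\{rm\}$ shows $\abs{\delta\beta_r(n,m)}<2$, so $\beta_r$ is a quasi-morphism and $\delta\beta_r$ is a bounded $2$-cocycle defining a class in $H_b^2(\mathbb{Z};\mathbb{Z})$. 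Additivity of $\Phi$ is of the same flavour: $\beta_{r+s}(n)-\beta_r(n)-\beta_s(n)=\floor{\{rn\}+\{sn\}}\in\{0,1\}$ is a bounded $1$-cochain, so $\delta\beta_{r+s}-\delta\beta_r-\delta\beta_s$ is a bounded coboundary and $[\delta\beta_{r+s}]=[\delta\beta_r]+[\delta\beta_s]$.

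For the kernel: if $r=k\in\mathbb{Z}$ then $\beta_k(n)=kn$ is a genuine homomorphism, so $\delta\beta_k=0$ and $\Phi(k)=0$. Conversely, suppose $\Phi(r)=0$, so $\delta\beta_r=\delta u$ for some bounded $u\colon\mathbb{Z}\to\mathbb{Z}$; then $\beta_r-u$ is a $1$-cocycle, that is, a homomorphism $\mathbb{Z}\to\mathbb{Z}$, say $\beta_r(n)-u(n)=an$ with $a\in\mathbb{Z}$. Consequently $(r-a)n=\{rn\}+u(n)$ is bounded uniformly in $n$, which is possible only if $r=a\in\mathbb{Z}$. Hence $\ker\Phi=\mathbb{Z}$.

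Surjectivity is the step with genuine content. Let $c$ be a bounded $2$-cocycle on $\mathbb{Z}$ with values in $\mathbb{Z}$. Since $\mathbb{Z}$ is a free group we have $H^2(\mathbb{Z};\mathbb{Z})=0$, so $c=\delta b$ for some (a priori unbounded) $1$-cochain $b\colon\mathbb{Z}\to\mathbb{Z}$. As $\delta b=c$ is bounded, $b$ is a quasi-morphism, hence by the homogenization Proposition above it is within bounded distance of its homogenization $\overline{b}$; and by homogeneity $\overline{b}(n)=n\,\overline{b}(1)$, so $\overline{b}(n)=rn$ with $r:=\overline{b}(1)$. Since $\abs{\beta_r(n)-rn}=\{rn\}<1$, the cochain $b-\beta_r$ is bounded, so $c-\delta\beta_r=\delta(b-\beta_r)$ is a bounded coboundary and $[c]=[\delta\beta_r]=\Phi(-r)$ lies in the image of $\Phi$. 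Combining the three points, $\Phi$ descends to an isomorphism $\mathbb{R}/\mathbb{Z}\cong H_b^2(\mathbb{Z};\mathbb{Z})$.

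Apart from routine manipulation, the only ingredients are the homogenization construction recalled above and the vanishing $H^2(\mathbb{Z};\mathbb{Z})=0$ (used to produce a primitive of an arbitrary bounded cocycle). I expect the surjectivity argument — in particular the replacement of the arbitrary integer primitive $b$ by a floor function $\beta_r$ modulo a bounded cochain — to be the only part that requires an idea rather than bookkeeping; the nonvanishing of $H_b^2(\mathbb{Z};\mathbb{Z})$, in contrast to $H_b^2(\mathbb{Z};\mathbb{R})=0$, is precisely the phenomenon being quantified. Alternatively one may deduce the statement from the vanishing of $H_b^{\bullet}(\mathbb{Z};\mathbb{R})$ in positive degrees (amenability of $\mathbb{Z}$) via the long exact sequence in bounded cohomology attached to $0\to\mathbb{Z}\to\mathbb{R}\to\mathbb{R}/\mathbb{Z}\to0$, which identifies $H_b^2(\mathbb{Z};\mathbb{Z})$ with $\mathrm{Hom}(\mathbb{Z},\mathbb{R}/\mathbb{Z})\cong\mathbb{R}/\mathbb{Z}$; I would keep the elementary argument as the main line, since it uses only material already developed in the paper.
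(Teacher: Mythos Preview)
The paper does not actually prove this proposition: it is stated with a \qed\ and the reader is referred to Frigerio's monograph for the argument. Your proof is correct and entirely self-contained, using only the homogenization construction already recalled in the paper together with the elementary fact $H^{2}(\mathbb{Z};\mathbb{Z})=0$. The alternative route you mention via amenability and the Bockstein long exact sequence is in fact the approach taken in Frigerio's book, so your ``main line'' gives a more hands-on argument than the cited reference while your aside recovers it.
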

Consult Frigerio~\cite{Frigerio} for proofs and further details.

Now recall that a connection cochain introduced in Moriyoshi~\cite{Moriyoshi}:
\begin{definition}
  Let $0 \to A \to \Gamma \to G \to 1$ be a central $A$-extension of $G$.
  A \emph{connection cochain} is a $1$-cochain $\tau \colon \Gamma \to A$ which satisfies the condition
  \[\tau(\gamma \cdot i(a)) = \tau(\gamma) + a\]
  for all $\gamma \in \Gamma, a \in A$.
\end{definition}
\begin{remark}
  Let $\Gamma$ be a central $A$-extension of $G$.
  Then there exists a $1$-$1$ correspondence between the sets of sections of $\Gamma \to G$ and the sets of connection cochains for this extension.
\end{remark}
\begin{remark}
  Given a central $A$-extension $\Gamma$ of G, let $B$ be an abelian group and $j \colon A \to B$ a homomorphism.
  Then a $1$-cochain $\tau \colon \Gamma \to B$ which satisfies the condition
  \[\tau(\gamma \cdot i(a)) = \tau(\gamma) + j(a)\]
  for all $\gamma \in \Gamma, a \in A$ is called a \emph{$B$-valued connection cochain}.
  There exists a way to construct a central $B$-extension $\Gamma_{B}$ of $G$, and then $\tau$ induces a connection cochain $\tau_{B} \colon \Gamma_{B} \to B$ for this extension.
\end{remark}
The fundamental property of connection cochain is the following
\begin{proposition}
  \label{conn-coch}
  Let $0 \to A \to \Gamma \to G \to 1$ be a central $A$-extension of $G$.
  For a connection cochain $\tau \colon \Gamma \to A$, there exists a unique $2$-cocycle $\sigma \colon G^{2} \to A$ such that $\delta \tau = p^{*} \sigma \colon \Gamma^{2} \to A$.
  Moreover the Euler class $e(\Gamma) \in H^{2}(G;A)$ coincides with $[-\sigma]$.\qed
\end{proposition}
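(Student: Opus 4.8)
The plan is to produce $\sigma$ by descent along $p$, check it is a cocycle, and then pin down its class by comparing it with the cocycle of a suitably normalized section. For the first step I would show that the coboundary $\delta\tau$, which by the sign convention of the paper reads $\delta\tau(\gamma_{1},\gamma_{2})=\tau(\gamma_{2})-\tau(\gamma_{1}\gamma_{2})+\tau(\gamma_{1})$, is constant on the fibres of $p\times p\colon\Gamma^{2}\to G^{2}$. The key computation is to replace $\gamma_{1},\gamma_{2}$ by $\gamma_{1}\cdot i(a),\gamma_{2}\cdot i(b)$ and exploit centrality of $i(A)$ together with the defining identity $\tau(\gamma\cdot i(a))=\tau(\gamma)+a$: since $(\gamma_{1}i(a))(\gamma_{2}i(b))=\gamma_{1}\gamma_{2}\cdot i(a+b)$, the three contributions $+a$, $+b$, and $-(a+b)$ cancel, so the value of $\delta\tau$ is unchanged. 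As $p$ is surjective, every pair in $G^{2}$ lifts, and this invariance lets me define $\sigma\colon G^{2}\to A$ unambiguously by $\sigma(p(\gamma_{1}),p(\gamma_{2}))=\delta\tau(\gamma_{1},\gamma_{2})$, which is exactly the assertion $\delta\tau=p^{*}\sigma$; uniqueness is immediate because $p\times p$ is onto. That $\sigma$ is a $2$-cocycle then follows formally: applying $\delta$ to $p^{*}\sigma=\delta\tau$ gives $p^{*}(\delta\sigma)=\delta^{2}\tau=0$, and since $p$ is surjective the map $p^{*}$ is injective on cochains, whence $\delta\sigma=0$.

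For the final assertion I would invoke the correspondence between connection cochains and sections. To $\tau$ I attach the section $s\colon G\to\Gamma$ characterized by $\tau\circ s=0$: concretely, choose any lift $\gamma$ of $g$ and set $s(g)=\gamma\cdot i(-\tau(\gamma))$, which is independent of the chosen lift and satisfies $p\circ s=\mathrm{id}$ and $\tau(s(g))=0$. Writing $\chi$ for the cocycle of this section, so that $s(g_{1})s(g_{2})=s(g_{1}g_{2})\cdot i(\chi(g_{1},g_{2}))$, and evaluating $\tau$ on both sides using $\tau\circ s=0$ yields $\tau(s(g_{1})s(g_{2}))=\chi(g_{1},g_{2})$. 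On the other hand, $\sigma(g_{1},g_{2})=\delta\tau(s(g_{1}),s(g_{2}))=\tau(s(g_{2}))-\tau(s(g_{1})s(g_{2}))+\tau(s(g_{1}))=-\tau(s(g_{1})s(g_{2}))$. Comparing the two identities gives $\sigma=-\chi$, and therefore $e(\Gamma)=[\chi]=[-\sigma]$.

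I expect the only genuinely delicate point to be the descent in the first step, namely verifying that $\delta\tau$ is invariant under the \emph{full} fibre of $p\times p$ rather than merely under a diagonal action; this is precisely where both the centrality of $i(A)$ and the connection-cochain identity are indispensable, and it is what guarantees that $\sigma$ is well defined. The remaining steps are formal bookkeeping, provided the section $s$ associated to $\tau$ is set up with the normalization $\tau\circ s=0$, so that the signs in the comparison $\sigma=-\chi$ come out consistently.
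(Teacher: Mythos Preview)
Your argument is correct. Note, however, that the paper does not actually supply a proof of this proposition: it is stated with a terminal \qed\ and attributed to Moriyoshi~\cite{Moriyoshi}, so there is no in-paper argument to compare against. What you have written is the standard descent-and-section argument one would expect, and all the steps---fibrewise invariance of $\delta\tau$ via centrality and the connection identity, the cocycle condition via injectivity of $p^{*}$ on cochains, and the identification $\sigma=-\chi$ using the section normalized by $\tau\circ s=0$---are carried out correctly and with the paper's sign conventions.
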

Finally, we recall that a group $G$ is \emph{uniformly perfect} if there exists an integer $k$ such that any element of the group $G$ is a product of at most $k$ commutators.
The fundamental property of central extension of a uniformly perfect group is
\begin{proposition}[{\cite[p.236f.]{B-G}}]
  \label{fundamental-remark}
  Let $0 \to \mathbb{Z} \to \Gamma \to G \to 1$ be a central $\mathbb{Z}$-extension of a uniformly perfect group $G$.
  Then there exists at most one function $\tau \colon \Gamma \to \mathbb{R}$ which is a homogeneous quasi-morphism and, simultaneously, an $\mathbb{R}$-valued connection cochain.\qed
\end{proposition}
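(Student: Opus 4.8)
The plan is to establish this uniqueness statement by a standard rigidity argument: if $\tau_{1},\tau_{2}\colon\Gamma\to\mathbb{R}$ are each simultaneously a homogeneous quasi-morphism and an $\mathbb{R}$-valued connection cochain, I would show that their difference $\tau:=\tau_{1}-\tau_{2}$ vanishes identically. The tension to exploit is that the connection-cochain condition makes $\tau$ blind to the central $\mathbb{Z}$, so $\tau$ descends to $G$, whereas the homogeneous-quasi-morphism condition then contradicts uniform perfectness of $G$ unless $\tau\equiv 0$.

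First I would observe that $\tau$ is again a homogeneous quasi-morphism on $\Gamma$, since $\delta\tau=\delta\tau_{1}-\delta\tau_{2}$ is bounded and $\tau(\gamma^{n})=n\tau(\gamma)$. Moreover, as $\tau_{1}$ and $\tau_{2}$ satisfy the connection-cochain identity with respect to the same inclusion $\mathbb{Z}\hookrightarrow\mathbb{R}$, one gets $\tau(\gamma\cdot i(a))=\tau(\gamma)$ for all $\gamma\in\Gamma$ and $a\in\mathbb{Z}$; hence $\tau$ is constant on the fibers of $p\colon\Gamma\to G$ and factors as $\tau=f\circ p$ for a unique $f\colon G\to\mathbb{R}$. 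Since $p$ is surjective, boundedness of $\delta\tau=p^{*}(\delta f)$ yields boundedness of $\delta f$, so $f$ is a quasi-morphism; and picking for each $g\in G$ a lift $\gamma\in p^{-1}(g)$ and using $f(g^{n})=\tau(\gamma^{n})=n\tau(\gamma)=nf(g)$ shows $f$ is homogeneous.

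It then remains to invoke the fact that a uniformly perfect group carries no non-zero homogeneous quasi-morphism. For this I would recall the standard properties of a homogeneous quasi-morphism $f$ with defect $D=\sup_{g,h}\lvert\delta f(g,h)\rvert$: it vanishes at the identity, is odd, and is conjugation-invariant, and therefore satisfies $\lvert f([a,b])\rvert\le D$ for all $a,b$; iterating the quasi-morphism inequality gives $\lvert f(w)\rvert\le(2k-1)D$ whenever $w$ is a product of at most $k$ commutators. If every element of $G$ is such a product for a fixed $k$, then $f$ is bounded on $G$, and homogeneity ($\lvert f(g)\rvert=\lvert f(g^{n})\rvert/n$ for every $n$) forces $f\equiv 0$. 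Consequently $\tau_{1}=\tau_{2}$.

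I do not expect a genuine obstacle: once the two hypotheses on $\tau_{1}$ and $\tau_{2}$ are separated --- the connection-cochain part removing the center, the quasi-morphism part meeting uniform perfectness --- the argument is essentially formal. The points requiring a little care are the ancillary lemmas on homogeneous quasi-morphisms used in the last step (oddness, vanishing at the identity, conjugation-invariance, and the resulting bound on products of commutators), which I would prove in a few lines or cite, together with the harmless but worth-noting observation that both connection cochains are normalized by the \emph{same} map $\mathbb{Z}\hookrightarrow\mathbb{R}$, which is precisely what makes the central terms cancel in the descent step.
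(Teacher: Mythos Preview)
Your argument is correct and is precisely the standard one: the difference of two $\mathbb{R}$-valued connection cochains descends to $G$, and a homogeneous quasi-morphism on a uniformly perfect group is bounded (via conjugation-invariance and the commutator estimate) and hence zero by homogeneity. The paper itself gives no proof of this proposition --- it merely cites Barge--Ghys \cite[p.~236f.]{B-G} and closes with a \qedsymbol\ --- so there is nothing to compare against here; your sketch is essentially the argument from that reference.
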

The groups $\Homeo_{+}(S^{1})$ and $Sp(2n;\mathbb{R})$ are both uniformly perfect, and then this proposition is essential to define the rotation numbers.

\section{Proof of Main Theorem}
Let $Sp(2n;\mathbb{R})$ be the group of symplectic matrices with respect to the standard symplectic form on $\mathbb{R}^{2n}$.
Since $\pi_{1}(Sp(2n;\mathbb{R})) \cong \mathbb{Z}$, its universal covering group $p \colon \widetilde{Sp}(2n;\mathbb{R}) \to Sp(2n;\mathbb{R})$ induces a central $\mathbb{Z}$-extension
\[0 \longrightarrow \mathbb{Z} \stackrel{i}{\longrightarrow} \widetilde{Sp}(2n;\mathbb{R}) \stackrel{p}{\longrightarrow} Sp(2n;\mathbb{R}) \longrightarrow 1.\]
There exist several quasi-morphisms on the group $\widetilde{Sp}(2n;\mathbb{R})$ which are also $\mathbb{R}$-valued connection cochains (\cite[C-1]{B-G}),
but according to Proposition~\ref{fundamental-remark}, their homogenization which we will denote $\tau \colon \widetilde{Sp}(2n;\mathbb{R}) \to \mathbb{R}$ is uniquely determined.
The map $\tau$ is called the \emph{symplectic translation number}.

Note that for each $g \in Sp(2n;\mathbb{R})$, the value $\tau(\tilde{g}) \bmod \mathbb{Z}$ is independent of the choice of a lift $\tilde{g} \in p^{-1}(g)$.
This is because $\tau$ is an $\mathbb{R}$-valued connection cochain.
Then define the \emph{symplectic rotation number} $\rho \colon Sp(2n;\mathbb{R}) \to \mathbb{R}/\mathbb{Z}$ by the formula
\[\rho(g) = \tau(\tilde{g}) \bmod \mathbb{Z},\]
where $\tilde{g} \in p^{-1}(g)$ is any lift of $g$.

Let $s \colon Sp(2n;\mathbb{R}) \to \widetilde{Sp}(2n;\mathbb{R})$ be a section of $p$ corresponding to a connection cochain $\floor{\cdot} \circ \tau \colon \widetilde{Sp}(2n;\mathbb{R}) \to \mathbb{Z}$.
Then, by Proposition~\ref{conn-coch}, there exists a $2$-cocycle $\sigma \colon Sp(2n;\mathbb{R}) \times Sp(2n;\mathbb{R}) \to \mathbb{Z}$ such that $p^{*} \sigma = \delta(\floor{\cdot} \circ \tau)$.
Since $\floor{\cdot} \circ \tau$ is a quasi-morphism, the cocycle $- \sigma$ is bounded,
and hence defines the bounded Euler class $e_{b} = [- \sigma] \in H_{b}^{2}(Sp(2n;\mathbb{R});\mathbb{Z})$.
Note that the homogenization of $\floor{\cdot} \circ \tau$ is nothing but the symplectic translation number $\tau$.
\begin{theorem}
  Given a symplectic matrix $g \in Sp(2n;\mathbb{R})$, let $\varphi_{g} \colon \mathbb{Z} \to Sp(2n;\mathbb{R})$ be a homomorphism defined by $\varphi_{g}(k) = g^{k}$.
  Then, under the identification $H_{b}^{2}(\mathbb{Z};\mathbb{Z}) \cong \mathbb{R}/\mathbb{Z}$,
  the pullback of the bounded Euler class $e_{b} \in H_{b}^{2}(Sp(2n;\mathbb{R});\mathbb{Z})$ by $\varphi_{g}$ coincides with the symplectic rotation number of $g$, that is,
  \[\varphi_{g}^{\,*}(e_{b}) = \rho(g)\]
  holds.
\end{theorem}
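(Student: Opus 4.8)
The plan is to pull the defining cocycle of $e_b$ back along $\varphi_g$ and to recognize the result, on the nose, as one of the model cocycles $\delta\beta_r$ appearing in Proposition~\ref{bdd-coh-of-Z}, with $r$ the symplectic translation number of a lift of $g$. The decisive feature is that the section $s$ was chosen to correspond to the connection cochain $\floor{\cdot}\circ\tau$ with $\tau$ \emph{homogeneous}; this rigidity makes the pullback computation an exact identity of cochains, not merely an equality of cohomology classes, so no coboundary correction has to be tracked.

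Concretely, I would first fix a lift $\tilde g\in p^{-1}(g)$ and note that $\tilde g^{\,k}$ is a lift of $g^{k}$ for every $k\in\mathbb{Z}$. Then, for all $k,l\in\mathbb{Z}$,
\[(\varphi_g^{\,*}\sigma)(k,l) = \sigma(g^{k},g^{l}) = (p^{*}\sigma)(\tilde g^{\,k},\tilde g^{\,l}) = \delta(\floor{\cdot}\circ\tau)(\tilde g^{\,k},\tilde g^{\,l}),\]
which expands to $\floor{\tau(\tilde g^{\,l})} - \floor{\tau(\tilde g^{\,k+l})} + \floor{\tau(\tilde g^{\,k})}$. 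Next I would invoke homogeneity of $\tau$, namely $\tau(\tilde g^{\,m}) = m\,\tau(\tilde g)$ for every $m\in\mathbb{Z}$; setting $r:=\tau(\tilde g)$, the above becomes $\floor{rl} - \floor{r(k+l)} + \floor{rk} = (\delta\beta_{r})(k,l)$. Hence $\varphi_g^{\,*}\sigma = \delta\beta_{r}$ as $2$-cochains on $\mathbb{Z}$, so
\[\varphi_g^{\,*}(e_b) = \varphi_g^{\,*}[-\sigma] = [-\delta\beta_{r}] \in H_b^{2}(\mathbb{Z};\mathbb{Z})\]
(the class is bounded since $\beta_r$ is a quasi-morphism). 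By Proposition~\ref{bdd-coh-of-Z} the identification $\mathbb{R}/\mathbb{Z}\cong H_b^{2}(\mathbb{Z};\mathbb{Z})$ sends $r\bmod\mathbb{Z}$ to $-[\delta\beta_{r}]$, and it depends only on $r$ modulo $\mathbb{Z}$; therefore $\varphi_g^{\,*}(e_b)$ corresponds to $r\bmod\mathbb{Z} = \tau(\tilde g)\bmod\mathbb{Z} = \rho(g)$, which is the asserted equality.

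I do not expect a genuine obstacle: the argument is a short direct computation once the objects are lined up. The points deserving care are that homogeneity of $\tau$ must be used for all integer exponents (negative ones included), that the pulled-back cocycle is indeed bounded so that it computes the pullback in \emph{bounded} cohomology, and that the answer is independent of the chosen lift $\tilde g$ — which holds because $\tau$ is an $\mathbb{R}$-valued connection cochain, so a different lift changes $r$ by an integer and leaves both $\rho(g)$ and the class $[-\delta\beta_{r}]$ unchanged. It is also worth remarking that this is exactly the analogue of Ghys' computation in Theorem~\ref{Ghys}, with the Poincar\'{e} translation number replaced by the symplectic translation number $\tau$; in both cases everything reduces to the homogeneity of the translation number together with the fact that the relevant section/connection cochain is $\floor{\cdot}$ composed with it.
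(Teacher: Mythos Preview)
Your proposal is correct and follows essentially the same route as the paper: both pull back $\sigma$ through $p^{*}\sigma=\delta(\floor{\cdot}\circ\tau)$ evaluated on powers of a lift of $g$, use homogeneity of $\tau$ to identify the result with $\delta\beta_{r}$ for $r=\tau(\tilde g)$, and then invoke Proposition~\ref{bdd-coh-of-Z}. The only cosmetic difference is that the paper fixes the particular lift $s(g)$ coming from the chosen section, whereas you take an arbitrary lift and then note the lift-independence; both lead to the same cochain identity $\varphi_g^{\,*}\sigma=\delta\beta_r$.
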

\begin{proof}
  For $g \in Sp(2n;\mathbb{R})$, define a quasi-morphism $\beta \colon \mathbb{Z} \to \mathbb{Z}$ by $\beta(k) = \floor{\tau(s(g)^{k})}$.
  Then we have
  \begin{align}
    \varphi_{g}^{\,*}\,\sigma (k, \ell)
    &= \sigma (g^{k}, g^{\ell}) = p^{*} \sigma (s(g)^{k}, s(g)^{\ell})\\
    &= \delta (\floor{\cdot} \circ \tau) (s(g)^{k}, s(g)^{\ell}) = \delta \beta (k, \ell),
  \end{align}
  for all $k, \ell \in \mathbb{Z}$.
  Let $\overline{\beta} \colon \mathbb{Z} \to \mathbb{R}$ be the homogenization of $\beta$, and set $r = \overline{\beta}(1) \in \mathbb{R}$:
  \[r = \overline{\beta}(1) = \lim_{k \to \infty} \frac{1}{k}\beta(k) = \lim_{k \to \infty} \frac{1}{k}\floor{\tau(s(g)^{k})} = \tau(s(g)).\]
  Since the symplectic translation number $\tau$ is a homogeneous quasi-morphism,
  \[\beta(k) = \floor{\tau(s(g)^{k})} = \floor{\tau(s(g)) \cdot k} = \beta_{r}(k),\]
  for all $k \in \mathbb{Z}$, and hence $\beta = \beta_{r}$.
  Therefore, under the identification $H_{b}^{2}(\mathbb{Z};\mathbb{Z}) \cong \mathbb{R}/\mathbb{Z}$ (Proposition~\ref{bdd-coh-of-Z}), we have
  \[\varphi_{g}^{\,*}(e_{b}) = \varphi_{g}^{\,*}([-\sigma]) = - [\delta \beta_{r}] = r \bmod \mathbb{Z} = \rho(g),\]
  as desired.
\end{proof}

\end{document}